\documentclass[preprint,10pt]{amsart}
\usepackage[T1]{fontenc}
\usepackage[english]{babel}
\usepackage{amssymb}
\usepackage{mathtools}
\usepackage{amsmath, empheq}
\usepackage[applemac]{inputenc}
\usepackage{amsthm}
\usepackage{amsaddr}
\usepackage{enumerate}
\usepackage{verbatim}
\linespread{1.1}

\newcommand{\partn}[1]{{\smallskip \noindent \textbf{#1.}}}
\newcommand{\partnn}[1]{{\smallskip \noindent \textbf{#1.}}}

\DeclareMathOperator{\ord}{ord}

\DeclareMathOperator{\id}{Id}
\DeclareMathOperator{\Q}{\mathbb{Q}}

\DeclareMathOperator{\Z}{\mathbb{Z}}

\DeclarePairedDelimiter{\ang}{\langle}{\rangle}
\numberwithin{equation}{section}

\newcommand\myeq{\mathrel{\overset{\makebox[0pt]{\mbox{\normalfont\tiny\sffamily IA}}}{=}}} 

\begin{document}

\renewcommand{\thefootnote}{\roman{footnote}} 
\newtheorem{mydef}{Definition} 
\newtheorem{thm}{Theorem} 
\newtheorem{MYthm}{Theorem}
\renewcommand*{\theMYthm}{\Alph{MYthm}}
\newtheorem{MYlemma}{Lemma}
\renewcommand*{\theMYlemma}{\Alph{MYlemma}}
\newtheorem{MYprop}{Proposition}
\renewcommand*{\theMYprop}{\Alph{MYprop}}
\newtheorem{example}{Example} 
\newtheorem{conj}{Conjecture}
\newtheorem{prop}{Proposition}
\newtheorem{corr}{Corollary} 
\newtheorem{lemma}{Lemma} 
\theoremstyle{remark}
\newtheorem{rem}{Remark}
\newtheorem*{prob}{Problem}

\pagenumbering{roman}

\title{Characterization of 2-ramified power series}
\author{Jonas Nordqvist}
\email{jonas.nordqvist@lnu.se}
\address{Department of Mathematics, Linn\ae us University, V\"{a}xj\"{o}, Sweden}
\maketitle

\begin{abstract}
In this paper we study lower ramification numbers of power series tangent to the identity that are defined over fields of positive characteristics $p$. Let $g$ be such a series, then $g$ has a fixed point at the origin and the corresponding lower ramification numbers of $g$ are then, up to a constant, the degree of the first  non-linear term of $p$-power iterates of $g$. The result is a complete characterization of power series $g$ having ramification numbers of the form ${2(1+p+\dots+p^n)}$. Furthermore, in proving said characterization we explicitly compute the first significant terms of $g$ at its $p$th iterate.

\vspace{4mm}
\noindent
{\bf Keywords:} Lower ramification numbers, iterations of power series, difference equations, arithmetic dynamics
\end{abstract}

\setcounter{page}{1}
\pagenumbering{arabic}

\section{Introduction}
Iteration of power series is an important part of arithmetic dynamics \cite{Silverman2007, AnashinKhrennikov2009, Shparlinski2007}, which is a very active area of research within the general field of dynamical systems.
In this article we are interested in power series defined over fields of positive characteristic $p$. In particular, the degree of the first non-linear term of $p$-power iterates of power series tangent to the identity, the so called lower ramification numbers of such series.
Recent results by Lindahl and Rivera-Letelier \cite{Lindahl2013, LindahlRiveraLetelier2015, LindahlRiveraLetelier2013} show that there is a connection between the geometric location of periodic points of power series over ultrametric fields and lower ramification numbers.

Throughout the paper let $p$ be a prime number and $k$ a field of characteristic $p$, also let $g \in k[[\zeta]]$ be a power series of the form \begin{equation}\label{g}g(\zeta) = \zeta+\cdots.\end{equation} The \emph{order} of a nonzero power series $g$ is the lowest degree of its non-zero terms and we denote this by $\ord(\cdot)$. We put $\ord(0):= +\infty$. Let $g^n(\zeta)$ denote the $n$-fold composition of itself. The \emph{lower ramification number} $i_n$ of a power series $g$ is defined as \begin{equation}\label{ramifnumber}i_n(g) := \ord\left(\frac{g^{p^n}(\zeta)-\zeta}{\zeta}\right).\end{equation}

A famous theorem of Sen \cite{Sen1969},\footnote{Alternative proofs are given in \cite{Lubin1995, LindahlRiveraLetelier2013}.} often referred to as Sen's theorem, states that if $g^m \not = \id$, for all integers $m\geq 1$ then \begin{equation*}i_n(g) \equiv i_{n-1}(g) \pmod{p^n}.\end{equation*}
Given $i_0(g) = 1$ we then have \begin{equation}\label{raminfnumber}i_n(g) \geq 1 + p +\dots+p^n.\end{equation} Rivera-Letelier \cite[Example 3.19]{RiveraLetelier2003} gave a complete characterization of power series for which (\ref{raminfnumber}) holds with equality. Keating \cite{Keating1992} proved a similar result for the special case that $p=3$.

\begin{mydef}\label{def2ramif}
Let $p$ be a prime and $k$ a field of characteristic $p$. Furthermore, let $g$ be a power series in  $k[[\zeta]]$. Then $g$ is said to be \emph{$2$-ramified} if its sequence of lower ramification numbers is of the form 
\begin{equation}\label{brbr}i_n(g) = 2(1+p+\dots+p^n).\end{equation} 
\end{mydef}

In this paper we give a complete characterization of all power series of the form (\ref{g}) for which Definition \ref{def2ramif} is satisfied.

\begin{thm}\label{maintheorem}Let $p$ be an odd prime and let $k$ be a field of characteristic $p$. 
Let $g \in k[[\zeta]]$ be a power series of the form 
\begin{equation*}\label{polyQ}g(\zeta) = \zeta\left(1  + \sum_{j=1}^{+\infty} a_{j}\zeta^{j}\right).\end{equation*} 
Then $g$ is 2-ramified if and only if $a_1 = 0$, $a_2\not= 0$ and 
\begin{equation}\label{result}3/2a_2^3 + a_3^2 -a_2a_4 \not= 0.\end{equation}
\end{thm}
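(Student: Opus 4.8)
The plan is to analyze the $p$th iterate $g^p$ directly by computing enough of its low-order terms to detect exactly when $i_1(g) = 2(1+p)$ rather than something larger, and then to bootstrap from $n=1$ to all $n$ using Sen's theorem together with the rigidity of $2$-ramified series. First I would reduce to the case $i_0(g)=1$ being replaced by $i_0(g)=2$: since $g(\zeta)=\zeta(1+a_1\zeta+a_2\zeta^2+\cdots)$, one sees immediately that $i_0(g)=1$ exactly when $a_1\neq 0$, so $2$-ramification forces $a_1=0$; thus from the outset we may assume $g(\zeta)=\zeta(1+a_2\zeta^2+a_3\zeta^3+\cdots)$ with $a_2$ possibly zero, and the goal becomes showing $a_2\neq 0$ together with \eqref{result} is equivalent to $i_1(g)=2(1+p)$.

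The technical core is an explicit expansion of $g^p$. Writing $g(\zeta)=\zeta+b\zeta^{m+1}+\cdots$ with $m=\ord\!\big((g(\zeta)-\zeta)/\zeta\big)$, a standard computation (iterating the substitution and tracking the two or three lowest surviving degrees, with all multiples of $p$ vanishing in characteristic $p$) gives $g^p(\zeta)=\zeta + c\,\zeta^{pm+1}+\cdots$ for an explicit $c$ expressed in terms of $a_2,a_3,a_4$; the point is to carry the expansion far enough that the coefficient of $\zeta^{pm+1}$ is visibly $c = \lambda\,(3/2\,a_2^3 + a_3^2 - a_2 a_4)$ for some nonzero constant $\lambda\in\mathbb{F}_p$ (this is where the hypothesis that $p$ is odd enters, so that $3/2$ and the relevant binomial coefficients make sense and are nonzero). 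Granting this, $i_1(g) = \ord\!\big((g^p-\zeta)/\zeta\big)$ equals $pm = 2p$ precisely when $m=2$ and $c\neq 0$, i.e. when $a_2\neq 0$ and \eqref{result} holds; if $m=2$ but $c=0$, then $i_1(g) > 2p$, and by Sen's theorem $i_1(g)\equiv i_0(g)=2\pmod{p}$ forces $i_1(g)\geq 2p+2 > 2(1+p)$... wait, $2(1+p)=2+2p$, so we need $i_1(g)=2+2p$; if $m\ge 3$ then $i_0(g)=m\ge 3\ne 2$ already fails, and if $m=2,c=0$ then $i_1(g)\ge 2p+2$ with the next admissible value strictly larger, so $2$-ramification fails. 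Hence the characterization holds at level $n=1$.

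To pass from $n=1$ to all $n$, I would invoke the structure of iteration: if $i_1(g)=2(1+p)$, then applying the same expansion machinery to $g^p$ in place of $g$ — noting $g^p(\zeta)=\zeta+c\zeta^{2p+1}+\cdots$ has the same shape with $m$ replaced by $2p$ — shows $i_2(g)=i_1(g^p)$ is governed by the analogous non-vanishing condition on the leading coefficients of $g^p$, which one checks is automatically satisfied once it held for $g$ (the relevant cubic expression for $g^p$ is, up to a nonzero scalar, a power of the original expression, or at any rate nonzero whenever $c\neq 0$). Iterating, $i_n(g)=2(1+p+\cdots+p^n)$ for all $n$. Conversely $2$-ramification trivially implies $i_1(g)=2(1+p)$, closing the equivalence. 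The main obstacle is the bookkeeping in the expansion of $g^p$: one must identify which contributions to the coefficient of $\zeta^{2p+1}$ survive mod $p$, and verify that they assemble into exactly $3/2\,a_2^3 + a_3^2 - a_2 a_4$ up to a unit — this is the one genuinely delicate calculation, and it is presumably the content of the ``explicit computation of the first significant terms'' advertised in the abstract.
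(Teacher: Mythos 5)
Your overall architecture (compute the first significant term of $g^p$, characterize $i_1(g)$, then propagate to all $n$) matches the paper's, but both load-bearing steps are missing or wrong. First, you place the first significant term of $g^p(\zeta)-\zeta$ at degree $pm+1=2p+1$, which would give $i_1(g)=2p$; this contradicts Sen's theorem ($i_1\equiv i_0=2\pmod p$), and indeed the content of Proposition~\ref{mainprop} is that the coefficients of $\zeta^{2p+1}$ and $\zeta^{2p+2}$ in $g^p(\zeta)-\zeta$ both vanish in characteristic $p$ (they are $a_2^p(2p-1)!!$ and $a_2^{p-1}a_3\bigl((2p+1)!!-(2p)!!\bigr)$, each divisible by $p$ through the double factorials), while the first surviving coefficient sits at $\zeta^{2p+3}$ and equals $a_2^{p-2}(3/2\,a_2^3+a_3^2-a_2a_4)$. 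You notice the degree mismatch mid-argument but never resolve it, and you defer the entire computation of the coefficient as ``presumably the content'' of the paper --- but that computation \emph{is} the proof of the hard direction; without it nothing is established.

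Second, your bootstrap from $n=1$ to all $n$ does not work as stated. Sen's theorem gives only the congruences $i_n\equiv i_{n-1}\pmod{p^n}$, which do not pin down $i_n$; and your claim that re-running the expansion on $g^p$ yields a non-vanishing condition that is ``automatically satisfied'' once it held for $g$ is unsubstantiated (a direct expansion of the $p$th iterate of $g^p$, a series with $i_0(g^p)=2p+2$, would have a quite different and harder shape). The paper instead invokes the Laubie--Sa\"{i}ne result (Lemma~\ref{laubiesainelemma}): since $p\nmid i_0(g)=2$ and $i_1(g)=2(1+p)<(p^2-p+1)\cdot 2$, the entire sequence is forced to be $i_n(g)=i_0(g)+\frac{p^n-1}{p-1}\bigl(i_1(g)-i_0(g)\bigr)=2(1+p+\cdots+p^n)$. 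Some such rigidity statement is indispensable here, and your sketch contains no substitute for it.
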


\begin{rem}
Note that for $p=2$, $g$ can be written of the form $\zeta + a_2\zeta^{p+1} + \cdots$, which implies $i_0(g) = p$. From \cite[Corollary 1]{LaubieSaine1998} we know that if $p$ divides $i_0(g)$ then $i_n(g) = i_0(g)p^{n}$ for all integers $n\geq0$. Therefore, Theorem \ref{maintheorem} is only stated for odd primes. 
\end{rem}

Our primary technical result is the computation of the first significant terms of the $p$th iterate of $g$. 

\begin{prop}\label{mainprop}
Let $p$ be a prime and $k$ field of characteristic $p$. Let $g \in k[[\zeta]]$ be a power series of the form 
\[g(\zeta) = \zeta\left(1 + a_2\zeta^2 + a_3\zeta^3 + a_4\zeta^4\right) \mod \langle \zeta^6 \rangle,\] 
then 
\begin{equation}\label{thepropeq}g^p(\zeta) - \zeta \equiv a_2^{p-2}\left(3/2a_2^3+a_3^2-a_2a_4\right)\zeta^{2p+3} \mod \langle \zeta^{2p+4} \rangle.\end{equation}
\end{prop}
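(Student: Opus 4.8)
The plan is to compute the iterate $g^p$ by tracking how a single application of $g$ transforms a power series, working modulo $\langle \zeta^{2p+4} \rangle$ throughout. The key structural observation is that if $h(\zeta) = \zeta + c_m \zeta^m + (\text{higher order})$ with $m \geq 2$, then iterating $g$ (which agrees with $h$ to leading order) interacts with the coefficients in a controlled way: the $\zeta^m$-coefficient grows linearly in the number of iterations until it vanishes, and only then does the next term become visible. Since the claimed leading term of $g^p - \zeta$ sits in degree $2p+3$, which is well beyond the first few degrees, most low-degree contributions must telescope to zero — and the mechanism for that is precisely the characteristic-$p$ vanishing of certain binomial-type sums $\sum_{j=0}^{p-1} j \equiv 0$, $\sum_{j=0}^{p-1} j^2 \equiv 0$, etc. First I would set $g(\zeta) = \zeta(1 + a_2\zeta^2 + a_3\zeta^3 + a_4\zeta^4)$ exactly (mod $\langle\zeta^6\rangle$) and introduce $g^n(\zeta) = \zeta(1 + \sum_j a_j^{(n)} \zeta^j)$, deriving recursion relations for the $a_j^{(n)}$ in $n$ from the identity $g^{n+1} = g \circ g^n$.

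The second step is to solve these recursions in order of increasing degree. The degree-2 coefficient satisfies $a_2^{(n+1)} = a_2^{(n)} + a_2$, so $a_2^{(n)} = n a_2$ and hence $a_2^{(p)} = 0$ in characteristic $p$; similarly the degree-3 coefficient gives $a_3^{(p)} = 0$ and one expects $a_j^{(p)} = 0$ for $j = 2, \dots, 2p+1$ as well, the vanishing in each degree being forced by a polynomial-in-$n$ identity evaluated at $n = p$ together with the congruences $\sum_{j} j^r \equiv 0 \pmod p$ for $0 \le r \le p-2$. The coefficients $a_j^{(n)}$ will be polynomials in $n$ of degree growing with $j$, and the delicate point is to carry enough terms: because $a_2^{(n)} = na_2$ appears raised to various powers in the nonlinear feedback, a factor like $a_2^{p-2}$ naturally emerges in degree $2p+3$, which matches the $a_2^{p-2}$ prefactor in \eqref{thepropeq}. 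I would organize the bookkeeping by writing each $a_j^{(n)}$ as a polynomial in $n$ with coefficients that are polynomials in $a_2, a_3, a_4$, and track only the terms that can survive to degree $2p+3$ at $n=p$.

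The third step is to isolate the degree-$(2p+3)$ coefficient $a_{2p+2}^{(p)}$ (recall $i_n$ is the order of $(g^{p^n}-\zeta)/\zeta$, so degree $2p+3$ of $g^p - \zeta$ corresponds to degree $2p+2$ of the quotient), extract its value at $n = p$, and simplify modulo $p$ to obtain the constant $a_2^{p-2}(3/2\, a_2^3 + a_3^2 - a_2 a_4)$. Here the fraction $3/2$ is legitimate since $p$ is odd. The main obstacle is purely computational: managing the combinatorial explosion of cross-terms when composing $g$ with a long polynomial and then summing a polynomial expression in $n$ over $n = 0, \dots, p-1$, while correctly applying the characteristic-$p$ collapse at each stage. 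I expect the cleanest route is to reduce modulo $\langle \zeta^{2p+4}\rangle$ at every intermediate step and to exploit that $g$ itself only has three nonlinear coefficients $a_2, a_3, a_4$ (mod $\langle\zeta^6\rangle$), so each composition step adds a bounded amount of new information; the recursion then closes after finitely many degrees and the final identity follows by evaluating the resulting polynomial identities at $n = p$.
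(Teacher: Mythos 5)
Your overall strategy --- lift to characteristic $0$, view the coefficients of the iterates as polynomial functions of the iteration index, and extract the answer from characteristic-$p$ behaviour of power sums --- points at the right phenomena, and it is in the same spirit as the paper's proof (which also solves difference equations in the iteration index and then reduces modulo $p$). But as written the plan has a genuine gap: it provides no mechanism that makes the bookkeeping uniform in $p$. Tracking $g^n(\zeta)=\zeta\bigl(1+\sum_j a_j^{(n)}\zeta^j\bigr)$ directly forces you to know $a_j^{(n)}$ for every $j\le 2p+2$, because the recursion for $a_{2p+2}^{(n+1)}$ involves convolutions such as $[\zeta^{2p+1}]\bigl(\sum_j a_j^{(n)}\zeta^j\bigr)^2$ that mix all lower coefficients. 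That is $2p+1$ unknown polynomials in $n$ of degrees growing up to about $p$, so the claim that ``the recursion then closes after finitely many degrees'' fails in the relevant sense: the number of degrees grows with $p$, and ``track only the terms that can survive to degree $2p+3$ at $n=p$'' presupposes exactly the identification that constitutes the proof. The paper's key device, which your plan is missing, is the telescoped iterate $\Delta_m(\zeta)=\Delta_{m-1}(g(\zeta))-\Delta_{m-1}(\zeta)$ with $\Delta_p=g^p-\zeta$: one shows $\ord(\Delta_m)=2m+1$, so at every step only the three leading coefficients $A_m,B_m,C_m$ (in degrees $2m+1,2m+2,2m+3$) are needed; they satisfy a triangular system of three first-order linear difference equations with closed-form solutions in double factorials, and the answer drops out of two explicit sum identities modulo $p$.

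Two further points. First, the nonzero answer does not come from the congruences $\sum_{j=0}^{p-1}j^r\equiv 0\pmod p$ for $1\le r\le p-2$ that you cite (those only explain why lower-order terms die); it comes from the exceptional cases, e.g. $\sum_{j=0}^{p-1}j^{p-1}\equiv-1\pmod p$, or, in the paper's normalization, from the single term of $\mathcal{S}_p(\alpha,\beta)=(2p+1)!!\sum_{j=1}^{p}\tfrac{\alpha j+\beta}{2j+1}$ at $j=(p-1)/2$, whose denominator is divisible by $p$. Your plan never isolates where the surviving contribution originates. Second, the hypothesis only fixes $g$ modulo $\langle\zeta^6\rangle$, so one must also show that the unknown coefficients of $\zeta^6,\zeta^7,\dots$ in $g$ do not affect the degree-$(2p+3)$ coefficient of $g^p-\zeta$; this is not automatic (a single composition already feeds them into comparatively low degrees of $g^n$) and is handled in the paper by checking that they never enter the recursion for $A_m,B_m,C_m$.
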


In combination with the following result from Laubie and Sa\"{i}ne 
this computation yields sufficient information to prove Theorem \ref{maintheorem}. 

\begin{lemma}[\cite{LaubieSaine1998}, Corollary 1]\label{laubiesainelemma}
Let $p$ be a prime and $k$ be a field of characteristic $p$. Moreover, let $g \in k[[\zeta]]$, be a power series of the form (\ref{g}). If $p \nmid i_0(g)$ and $i_1(g) < (p^2-p+1)i_0(g)$, then \[i_n(g) = i_0(g) + \frac{p^n-1}{p-1}(i_1(g) - i_0(g)),\] for all integers $n \geq0$.
\end{lemma}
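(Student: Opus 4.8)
The plan is to induct on $n$, reducing the closed formula to a statement about how a single $p$th-power iteration transforms the ramification jump. Writing $H = g^{p^{b}}$ and using $g^{p^{a+b}} = (g^{p^{a}})^{p^{b}}$, one gets $i_{a}(g^{p^{b}}) = i_{a+b}(g)$, so in particular $i_0(g^{p^{n-1}}) = i_{n-1}(g)$ and $i_1(g^{p^{n-1}}) = i_n(g)$. Since $\tfrac{p^n-1}{p-1} - \tfrac{p^{n-1}-1}{p-1} = p^{n-1}$, the asserted formula is equivalent, by telescoping, to the jump relation $i_n(g) - i_{n-1}(g) = p^{n-1}\bigl(i_1(g) - i_0(g)\bigr)$ for every $n \ge 1$; that is, the successive ramification jumps multiply by $p$. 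The cases $n=0,1$ are tautologies given $i_0(g)$ and the definition of the jump $d := i_1(g) - i_0(g)$.

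The inductive step is driven by a one-step lemma: if $H$ is tangent to the identity with $p \nmid i_0(H)$ and $i_1(H) < (p^2-p+1)\,i_0(H)$, then $i_2(H) - i_1(H) = p\,\bigl(i_1(H) - i_0(H)\bigr)$. Granting this I apply it to $H = g^{p^{n-2}}$ for $n \ge 2$. Sen's theorem gives $i_{n-2}(g) \equiv i_0(g) \pmod p$, so $p \nmid i_0(H)$; and the smallness hypothesis $i_1(g) < (p^2-p+1)\,i_0(g)$ is inherited at every level, since once the formula is known up to index $n-1$ a direct estimate shows $i_{k+1}(g) < (p^2-p+1)\,i_k(g)$ for all $k \le n-2$ (the term $-p^{k}(p-1)d$ makes the inequality only loosen as $k$ grows). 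The conclusion then reads $i_n(g) - i_{n-1}(g) = p\,\bigl(i_{n-1}(g) - i_{n-2}(g)\bigr)$, which with the inductive hypothesis yields $i_n(g) - i_{n-1}(g) = p^{n-1}d$, closing the induction.

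For the one-step lemma the engine is the telescoping identity $H^p - \mathrm{id} = \sum_{j=0}^{p-1}(H - \mathrm{id})\circ H^{j}$, expanded about $\zeta$ by the formal (Hasse) Taylor expansion to avoid denominators in characteristic $p$. With $m = i_0(H)$ and $\delta_j = H^j - \mathrm{id}$ of order $m+1$, the coefficient of $\zeta^{m+1+r}$ in $\delta_j$ is a polynomial in $j$ of degree at most $\lfloor r/m\rfloor + 1$, since each increment of the $j$-degree costs at least $m$ in $\zeta$-order. Summing over $j$ annihilates every monomial $j^{s}$ except those with $(p-1)\mid s$ and $s \ge 1$, for which $\sum_{j=0}^{p-1} j^{s} \equiv -1 \pmod p$; thus only contributions of $j$-degree a positive multiple of $p-1$ survive, and tracking their minimal $\zeta$-order expresses the jump of $H$ as a function of the leading coefficients of $H$. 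Applying the same single-iteration computation to $H^p$, whose leading coefficients I read off from this very expansion, and tracking how those coefficients transform under $H \mapsto H^p$, then shows that the jump multiplies by exactly $p$; the smallness hypothesis is precisely what keeps the $s = p-1$ contribution strictly below the competing $s = 2(p-1)$ and Frobenius-type $p$th-power contributions.

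The hard part will be the characteristic-$p$ bookkeeping in this last comparison. Several families of terms share the naive minimal order of size $\approx p\,i_0(H)$, and Sen's congruence forces them to cancel, so the genuine leading term sits higher and must be extracted by computing the next layer of coefficients — exactly the kind of calculation carried out for $i_0 = 2$ in Proposition \ref{mainprop}. Propagating enough of the leading data of $g^{p^{n}}$ through each iteration to re-apply the jump computation, while verifying that the threshold $(p^2-p+1)\,i_0$ is precisely where a new $p$th-power term would tie the surviving power-sum term and destroy the linear recursion (as it does in the divisible case recorded in the Remark), is where the real work lies.
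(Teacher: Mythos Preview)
The paper does not prove this lemma at all; it is quoted as Corollary~1 of Laubie--Sa\"{i}ne \cite{LaubieSaine1998} and used as a black box in the proof of Theorem~\ref{maintheorem}. So there is no in-paper argument to compare against. The original proof in \cite{LaubieSaine1998} is not by direct coefficient manipulation: it goes through Wintenberger's theory of fields of norms, which attaches to $g$ an abelian extension of local fields whose higher ramification filtration encodes the sequence $(i_n(g))$, and then invokes the arithmetic constraints on that filtration (Hasse--Arf type statements) to force the arithmetic progression once the first jump is small enough.

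Your inductive reduction to a one-step statement is correct: the telescoping of $\tfrac{p^n-1}{p-1}$, the identity $i_a(g^{p^b}) = i_{a+b}(g)$, and the propagation of the smallness hypothesis to $H = g^{p^{n-2}}$ all check out (you implicitly use $d \ge 0$, which follows from $\ord(g^p - \id) \ge \ord(g - \id)$). The gap is that your ``one-step lemma'' is the entire content of the result, and you have not proved it --- you explicitly say that ``the real work lies'' in bookkeeping you have not carried out. The difficulty is real: to pin down $i_2(H)$ you must control $H^p$ to order roughly $i_1(H) + p\,d + 1$, which for $d$ near the threshold $(p^2-p)\,i_0$ means tracking on the order of $p^3 i_0$ coefficients of $H^p$, and you must show not merely a lower bound on the order but that a specific coefficient is \emph{nonzero}. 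Proposition~\ref{mainprop} carries out exactly this kind of computation only in the single case $i_0 = 2$, $d = 2p$, and already that occupies most of the paper; your sketch offers no mechanism for closing the calculation for arbitrary $i_0$ and $d$. As written, then, this is an outline whose decisive step is missing, and the computational route you propose is substantially harder than the structural argument Laubie--Sa\"{i}ne actually give.
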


Lemma \ref{laubiesainelemma} is a restatement of the last statement of Corollary 1 in \cite{LaubieSaine1998}. 

Our approach of calculating the $p$th iterate of $g$ given in Proposition \ref{mainprop} is similar to methods used in \cite{LindahlRiveraLetelier2015, LindahlRiveraLetelier2013}. Proofs of Theorem \ref{maintheorem} and Proposition \ref{mainprop} will be given in \S3.

\subsection{Implication}
In \cite{LindahlRiveraLetelier2015, LindahlRiveraLetelier2013} the authors study the relation between the lower ramification numbers and the geometric location of periodic points of ultrametric dynamical systems. In their results several important discoveries concerning lower ramification numbers have been made. The authors are especially interested in parabolic power series not necessarily tangent to the identity. Let $\gamma \in k$ such that $\gamma^q = 1$, and $f\in k[[\zeta]]$, be a power series of the form $f(\zeta) = \gamma\zeta + \cdots$. Then $f$ is said to be \emph{minimally ramified}\footnote{The notion of minimal ramification was first introduced for wildly ramified automorphisms, i.e. $\gamma = 1$ by \cite{LaubieMovahhediSalinier2002} where it corresponds to ramification numbers of the form $1+p+\cdots+p^n$.} if \[i_n(f^q) = q(1+p+\dots+p^n).\]

In \cite[Theorem C]{LindahlRiveraLetelier2015} a characterization of minimally ramified power series is given. However, for the case that $\gamma =-1$, i.e. $q=2$, minimal ramification corresponds to 2-ramification, in the sense that $f$ is minimally ramified if and only if $f^2$ is 2-ramified. This implies that Theorem \ref{maintheorem} provides an alternative proof for this special case, where $f^2 = g$. In fact straightforward computation yields the following corollary from Theorem \ref{maintheorem}.

\begin{corr}\label{corrminuz}
Let $p$ be an odd prime and $k$ a field of characteristic $p$. Furthermore let $f \in k[[\zeta]]$ be of the form \begin{equation*}\label{fminuszeta} f(\zeta) = \zeta\left(-1 + \sum_{j=1}^{+\infty} a_j \zeta^{j}\right).\end{equation*} Then $f^2(\zeta)$ is 2-ramified if and only if \begin{equation*}\label{corrresult}(a_1^2+a_2) (11 a_1^4+25 a_1^2 a_2+12 a_1 a_3+6 a_2^2+4 a_4)  \not= 0\end{equation*}
\end{corr}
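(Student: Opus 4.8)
The plan is to deduce the corollary directly from Theorem \ref{maintheorem} applied to $g := f^{2}$, so the entire argument is an explicit computation of the second iterate to fifth order. Writing
\[
f(\zeta) = -\zeta + a_1\zeta^{2} + a_2\zeta^{3} + a_3\zeta^{4} + a_4\zeta^{5} + \cdots
\]
and $w := f(\zeta)$, one has $f^{2}(\zeta) = f(w) = -w + a_1 w^{2} + a_2 w^{3} + a_3 w^{4} + a_4 w^{5} + \cdots$. The coefficients $a_5, a_6, \dots$ of $f$ affect only the terms of degree $\ge 6$ in $f(w)$, and by Theorem \ref{maintheorem} the $2$-ramification of $g$ depends only on its coefficients of degree at most $5$; hence it suffices to expand modulo $\langle\zeta^{6}\rangle$. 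Computing $w^{2}, w^{3}, w^{4}, w^{5}$ to the required order and collecting terms yields
\[
g(\zeta) = \zeta\bigl(1 + b_1\zeta + b_2\zeta^{2} + b_3\zeta^{3} + b_4\zeta^{4}\bigr) \bmod \langle\zeta^{6}\rangle,
\]
with
\[
b_1 = 0, \qquad b_2 = -2(a_1^{2}+a_2), \qquad b_3 = a_1(a_1^{2}+a_2), \qquad b_4 = 3a_2^{2} - a_1^{2}a_2 - 6a_1a_3 - 2a_4.
\]
In particular $g$ is tangent to the identity and its coefficient of $\zeta^{2}$ vanishes automatically, so Theorem \ref{maintheorem} applies and one of its three conditions is satisfied for free.

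By Theorem \ref{maintheorem}, $g$ is $2$-ramified if and only if $b_2 \neq 0$ and $3/2\, b_2^{3} + b_3^{2} - b_2 b_4 \neq 0$. Setting $s := a_1^{2}+a_2$, so that $b_2 = -2s$ and $b_3 = a_1 s$, substitution and expansion give the identity
\[
3/2\, b_2^{3} + b_3^{2} - b_2 b_4 = -s\bigl(11a_1^{4} + 25a_1^{2}a_2 + 12a_1a_3 + 6a_2^{2} + 4a_4\bigr),
\]
which holds over any ring in which $2$ is invertible, in particular over $k$ since $p$ is odd. For the same reason $b_2 \neq 0$ is equivalent to $s \neq 0$, and since $k$ is an integral domain the conjunction of $b_2 \neq 0$ and $3/2\, b_2^{3} + b_3^{2} - b_2 b_4 \neq 0$ is equivalent to
\[
(a_1^{2}+a_2)\bigl(11a_1^{4} + 25a_1^{2}a_2 + 12a_1a_3 + 6a_2^{2} + 4a_4\bigr) \neq 0,
\]
which is exactly the claimed criterion.

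The argument is purely computational; the one delicate point is the fifth-order expansion of $f \circ f$, where enough terms must be carried in each power $w^{j}$ and the signs coming from the multiplier $-1$ must be tracked carefully. I expect this bookkeeping, rather than any conceptual issue, to be the main obstacle. A convenient internal check is the subcase $a_1 = 0$: there $b_3 = 0$, $b_2 = -2a_2$, $b_4 = 3a_2^{2} - 2a_4$, and the criterion reduces to $a_2(3a_2^{2} + 2a_4) \neq 0$, matching what Theorem \ref{maintheorem} and Proposition \ref{mainprop} give when applied directly to $g(\zeta) = \zeta(1 + b_2\zeta^{2} + b_4\zeta^{4}) \bmod \langle\zeta^{6}\rangle$.
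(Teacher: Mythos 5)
Your proposal is correct and takes essentially the same route as the paper's proof: expand $f^{2}$ modulo $\langle\zeta^{6}\rangle$, apply Theorem \ref{maintheorem} to $g=f^{2}$ (noting $b_1=0$ automatically), and factor the resulting expression as $-(a_1^2+a_2)(11a_1^4+25a_1^2a_2+12a_1a_3+6a_2^2+4a_4)$. Incidentally, your coefficient $3a_2^{2}$ in $b_4$ is the correct one — the paper's printed $3a_2^{3}$ in the $\zeta^{5}$ coefficient of $f^{2}$ is a typo, as weighted-degree homogeneity (and the final factored form) confirms.
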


The proof of this corollary follows from Theorem \ref{maintheorem} and is given in \S3.1, and a consequence of the corollary is the following interesting example.

\begin{example}\label{minuzpoly}
Let $p$ be an odd prime and $k$ a field of characteristic $p$. Let $f \in k[\zeta]$ be the polynomial $f(\zeta) = -\zeta + \zeta^2$, 
then $f^2$ is 2-ramified if and only if $p\not=11$.
\end{example}

\subsection{Related works}
The relation between lower ramification numbers and arithmetic dynamics over ultrametric fields is one of the motivations for this study. However, ramification numbers have also been considered in different contexts. In the study of the potential sequences of ramification numbers, Keating \cite{Keating1992} used the relation between the ramification numbers and abelian extensions of $k((t))$. Laubie and Sa\"{i}ne \cite{LaubieSaine1997, LaubieSaine1998} could later improve these results by applying Wintenberger's theory on fields of norms \cite{Wintenberger2004}. In \cite{LaubieMovahhediSalinier2002} the authors study Lubin's conjecture \cite{Lubin1994}, on the relation between wildly ramified power series and formal groups.

\subsection{Acknowledgments}
I would like to thank Karl-Olof Lindahl for fruitful discussions and support during this work, and for introducing me to the topic. His comments and guidance certainly helped me improve the presentation, and clear my thoughts about these matters. I would also like to thank Juan Rivera-Letelier and the referee for helpful comments on the paper.

\section{Preliminaries}
Given a ring $R$ and an element $a\in R$ we let $\langle a \rangle$ denote the ideal of $R$ generated by $a$.

Throughout the paper for any nonnegative integer $n$ let $n!!$ denote the \emph{double factorial} of $n$. Let $0!! = 1!! = 1$, and for integers $n\geq 2$, we have \[n!! = n(n-2)!!.\]

Let $(k, |\cdot|)$ be an ultrametric field, and let $\mathcal{O}_k$ denote the ring of integers of $k$, and $\mathfrak{m}_k$ its maximal ideal. Let $\widetilde{k}:=\mathcal{O}_k/\mathfrak{m}_k$ be the residue field of $k$. Furthermore, denote the projection in $\widetilde{k}$ of an element $a$ of $\mathcal{O}_k$ by $\widetilde{a}$; it is the reduction of $a$. The reduction of a power series $g \in \mathcal{O}_k[[\zeta]]$ is the power series $\widetilde{g}(\zeta) \in \widetilde{k}[[\zeta]]$ whose coefficients are the reductions of the corresponding coefficients of $g$. Moreover, let $\Q_p$ be the $p$-adic numbers and let  $\Z_p$ denote its ring of integers.

\section{Characterization of 2-ramified power series}
In this section we prove our main results. As mentioned in the introduction Theorem \ref{maintheorem} is a consequence of Proposition \ref{mainprop}, which also will be proved within this section. However, to prove this in turn we will need several lemmas and the proofs of Theorem \ref{maintheorem} and Proposition \ref{mainprop} will follow thereafter.

\begin{lemma}\label{sumident} Let $p$ be an odd prime. For each integer $n\geq1$ let $\mathcal{R}_n$ and $\mathcal{T}_n$ in $\Q_p$ be defined by
\begin{equation}\mathcal{R}_n:=(2n-1)!!\sum_{r = 1}^{n}\left[\prod_{j=r+1}^{n}\frac{2j}{2j-1}\right],\end{equation}
and
\begin{equation}\mathcal{T}_n:=(2n+1)!!\sum_{j=1}^n\frac{(2j)!!}{(2j+1)!!}.
\end{equation}
Then \[\mathcal{R}_n = (2n+1)!! - (2n)!! \text{ and }\mathcal{T}_n = (2n+2)!! - 2(2n+1)!!.\] Moreover, $\widetilde{\mathcal{R}}_p = \widetilde{\mathcal{T}}_p = 0$.
\end{lemma}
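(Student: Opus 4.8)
The plan is to recast $\mathcal{R}_n$ into the same shape as $\mathcal{T}_n$, evaluate both resulting sums by telescoping, and then read off the divisibility at $n=p$ from the closed forms.

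\textbf{Step 1: normalise $\mathcal{R}_n$.} First I would rewrite the inner product. Using $\prod_{j=r+1}^{n}2j=(2n)!!/(2r)!!$ and $\prod_{j=r+1}^{n}(2j-1)=(2n-1)!!/(2r-1)!!$, together with the empty-product convention when $r=n$, one obtains
\[\prod_{j=r+1}^{n}\frac{2j}{2j-1}=\frac{(2n)!!\,(2r-1)!!}{(2n-1)!!\,(2r)!!},\]
and hence $\mathcal{R}_n=(2n)!!\sum_{r=1}^{n}\frac{(2r-1)!!}{(2r)!!}$. So both $\mathcal{R}_n$ and $\mathcal{T}_n$ are now of the form (a double factorial) $\times$ (a sum of ratios of consecutive double factorials).

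\textbf{Step 2: telescoping.} Next I would check the two elementary identities
\[\frac{(2r+1)!!}{(2r)!!}-\frac{(2r-1)!!}{(2r-2)!!}=\frac{(2r-1)!!}{(2r)!!},\qquad\frac{(2j+2)!!}{(2j+1)!!}-\frac{(2j)!!}{(2j-1)!!}=\frac{(2j)!!}{(2j+1)!!},\]
each obtained by factoring out the common term and using $\tfrac{2r+1}{2r}-1=\tfrac1{2r}$, respectively $\tfrac{2j+2}{2j+1}-1=\tfrac1{2j+1}$. Summing the first from $r=1$ to $n$ telescopes to $\frac{(2n+1)!!}{(2n)!!}-\frac{1!!}{0!!}=\frac{(2n+1)!!}{(2n)!!}-1$, and the second from $j=1$ to $n$ telescopes to $\frac{(2n+2)!!}{(2n+1)!!}-\frac{2!!}{1!!}=\frac{(2n+2)!!}{(2n+1)!!}-2$. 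Multiplying by $(2n)!!$ and $(2n+1)!!$ respectively gives $\mathcal{R}_n=(2n+1)!!-(2n)!!$ and $\mathcal{T}_n=(2n+2)!!-2(2n+1)!!$. (One could equally prove both closed forms by a direct induction on $n$, the base case $n=1$ giving $\tfrac12$ and $\tfrac23$ on both sides.)

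\textbf{Step 3: reduction at $n=p$.} Since the residue field of $\Q_p$ is $\mathbb{F}_p$ and $\mathcal{R}_p,\mathcal{T}_p\in\Z$, it is enough to show $p\mid\mathcal{R}_p$ and $p\mid\mathcal{T}_p$. Now $(2p)!!=2^p\,p!$ and $(2p+2)!!=2^{p+1}(p+1)!$ are both divisible by $p$, while $(2p+1)!!=1\cdot3\cdots(2p-1)(2p+1)$ contains the factor $p$ because $p$ is odd and $1\le p\le 2p-1$; hence $p$ divides each of $(2p)!!,(2p+1)!!,(2p+2)!!$, and therefore also $\mathcal{R}_p=(2p+1)!!-(2p)!!$ and $\mathcal{T}_p=(2p+2)!!-2(2p+1)!!$. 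The hypothesis that $p$ is odd is used only at this last point, to place $p$ among the odd factors of $(2p+1)!!$; indeed for $p=2$ the claim fails, as $5!!=15$. I do not anticipate a genuine obstacle: the only places to be careful are the empty-product convention in Step 1 and the evaluation of the telescoping boundary terms $\tfrac{1!!}{0!!}=1$ and $\tfrac{2!!}{1!!}=2$.
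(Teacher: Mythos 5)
Your proof is correct. It follows the paper's proof up to and including the normalisation $\prod_{j=r+1}^{n}\tfrac{2j}{2j-1}=\tfrac{(2n)!!\,(2r-1)!!}{(2n-1)!!\,(2r)!!}$, but then diverges in method: where the paper proves both closed forms by induction on $n$ (verifying the induction step by pulling out the factor $\tfrac{2n+2}{2n+1}$, resp.\ $\tfrac{2n+3+1}{2n+3}$), you exhibit the explicit antidifferences
\[\frac{(2r-1)!!}{(2r)!!}=\frac{(2r+1)!!}{(2r)!!}-\frac{(2r-1)!!}{(2r-2)!!},\qquad \frac{(2j)!!}{(2j+1)!!}=\frac{(2j+2)!!}{(2j+1)!!}-\frac{(2j)!!}{(2j-1)!!},\]
and telescope. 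The two arguments encode the same computation, but your version is slightly slicker, and it dispenses with the paper's somewhat awkward base cases (the paper starts the $\mathcal{R}_n$ induction at $n=2$ without checking $n=1$, which your boundary terms $1!!/0!!=1$ and $2!!/1!!=2$ handle cleanly). You also add genuine value in Step 3: the paper disposes of $\widetilde{\mathcal{R}}_p=\widetilde{\mathcal{T}}_p=0$ with the single sentence ``follows from the first by simply putting $n=p$,'' whereas you spell out the needed divisibilities $p\mid(2p)!!$, $p\mid(2p+1)!!$, $p\mid(2p+2)!!$ and correctly identify that the hypothesis that $p$ is odd enters only to place $p$ among the odd factors of $(2p+1)!!$ (with the counterexample $5!!=15$ for $p=2$). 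No gaps.
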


\begin{proof} The last consequence of the lemma follows from the first by simply putting $n=p$.

\partnn{Proof of $\mathcal{R}_n$}
We proceed by induction in $n$ to see that the identity is valid. For $n = 2$ it holds and we prove that it holds for arbitrarily chosen $n$, but first note that 
\[\sum_{r = 1}^{n}\left[\prod_{j=r+1}^{n}\frac{2j}{2j-1}\right] = \sum_{r = 1}^{n}\frac{(2n)!!(2r-1)!!}{(2n-1)!!(2r)!!}.\] Now we proceed by induction in $n$ and study $n+1$
\begin{align*}
\sum_{r = 1}^{n+1}\frac{(2n+2)!!(2r-1)!!}{(2n+1)!!(2r)!!} 
&= \frac{(2n+2)!!(2n+1)!!}{(2n+1)!!(2n+2)!!} + \sum_{r = 1}^{n}\frac{(2n+2)!!(2r-1)!!}{(2n+1)!!(2r)!!} \\
&= 1 + \frac{2n+2}{2n+1}\sum_{r = 1}^{n}\frac{(2n)!!(2r-1)!!}{(2n-1)!!(2r)!!} \\
&\myeq 1 + \frac{2n+2}{2n+1}\left((2n+1) - \frac{(2n)!!}{(2n-1)!!}\right) \\
&= 1 + (2n+2) - \frac{(2n+2)!!}{(2n+1)!!} \\
&= (2n+3) - \frac{(2n+2)!!}{(2n+1)!!},
\end{align*}
by multiplying with $(2n+1)!!$ we get the proposed identity. 

\partnn{Proof of $\mathcal{T}_n$}
A straightforward computation shows that the identity holds for $n=1$. We proceed by induction in $n$. Assume that the lemma holds for $n\geq1$. Then 
\begin{align*}\sum_{j=1}^{n+1}\frac{(2j)!!}{(2j+1)!!} &= \frac{(2n+2)!!}{(2n+3)!!} + \sum_{j=1}^n\frac{(2j)!!}{(2j+1)!!}\\ &\myeq \frac{(2n+2)!!}{(2n+3)!!} + \frac{(2n+2)!! - 2(2n+1)!!}{(2n+1)!!} \\
 &= \frac{(2n+3 + 1)(2n+2)!! - 2(2n+3)!!}{(2n+3)!!} \\
&= \frac{(2n+4)!!-2(2n+3)!!}{(2n+3)!!},
\end{align*} which completes the induction step, and we finish the proof by multiplying both sides with $(2n+3)!!$.
\end{proof}

\begin{lemma}\label{sumidentfinitefield}
Let $p$ be an odd prime and let $\alpha,\beta$ be integers. For every integer $n\geq1$ let $\mathcal{S}_n(\alpha,\beta)$ in $\Q_p$ be defined by
\begin{equation}\mathcal{S}_n(\alpha,\beta) := (2n+1)!!\sum_{j=1}^n\frac{\alpha j + \beta}{2j+1}.\end{equation}
Then $\mathcal{S}_p(\alpha,\beta) \in \Z_p$ and $\widetilde{\mathcal{S}}_p(\alpha,\beta) = \alpha/2 - \beta$.
\end{lemma}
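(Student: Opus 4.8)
The plan is to prove the integrality claim first and then compute the reduction modulo $p$ directly; the crucial observation is that after clearing denominators in $\mathcal{S}_p$ only a single summand survives modulo $p$.

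For integrality, I would rewrite
\[\mathcal{S}_n(\alpha,\beta) = \sum_{j=1}^n (\alpha j + \beta)\,\frac{(2n+1)!!}{2j+1},\]
and note that for $1 \le j \le n$ the integer $2j+1$ is one of the factors of $(2n+1)!! = 1\cdot 3 \cdots (2n+1)$, so each quotient $(2n+1)!!/(2j+1)$ lies in $\Z$. Hence $\mathcal{S}_n(\alpha,\beta)\in\Z$ for every $n$, and in particular $\mathcal{S}_p(\alpha,\beta)\in\Z\subseteq\Z_p$, which is the first assertion.

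For the reduction, I would specialize to $n=p$. Since $p$ is odd and $p\le 2p+1$, the prime $p$ is the unique odd multiple of $p$ among the factors $1,3,\dots,2p+1$ of $(2p+1)!!$, so one can write $(2p+1)!! = pM$ with $p\nmid M$. For every index $j$ with $2j+1\ne p$ the factor $p$ still divides $(2p+1)!!/(2j+1)$, so that term vanishes modulo $p$; the only surviving index is $j=(p-1)/2$, where $(2p+1)!!/(2j+1) = M$. This gives
\[\mathcal{S}_p(\alpha,\beta) \equiv \bigl(\alpha\tfrac{p-1}{2}+\beta\bigr)M \pmod p.\]
Finally I would evaluate $M$ modulo $p$: the numbers $1,3,\dots,2p-1$ form a complete residue system modulo $p$ (the common difference $2$ being coprime to $p$), the one divisible by $p$ being $p$ itself, so deleting it and applying Wilson's theorem gives a product $\equiv (p-1)!\equiv -1\pmod p$; since the leftover factor $2p+1\equiv 1\pmod p$, we get $M\equiv -1\pmod p$. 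Combined with $\tfrac{p-1}{2}\equiv -\tfrac12\pmod p$ this yields $\widetilde{\mathcal{S}}_p(\alpha,\beta) = \alpha/2-\beta$.

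There is no real obstacle here — the argument is elementary modular arithmetic — and the only place demanding care is the sign: the surviving factor $M$ is $\equiv -1$, not $\equiv 1$, and it is precisely this sign that turns the naive value $\beta-\alpha/2$ into $\alpha/2-\beta$. One should also check that $(p-1)/2$ genuinely lies in $\{1,\dots,p\}$, so that the surviving term is actually present in the sum, which holds for every odd prime $p$.
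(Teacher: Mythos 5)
Your proof is correct and follows essentially the same route as the paper: isolate the single term $j=(p-1)/2$ surviving modulo $p$, and evaluate $(2p+1)!!/p \equiv -1 \pmod p$ via Wilson's theorem (the paper reduces the product to $(p+1)!!(p-2)!! = (p+1)(p-1)!$, while you use the complete-residue-system observation, but these are the same computation in substance). The added explicit verification of integrality and of the sign is a welcome touch of care, not a divergence.
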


\begin{proof} Each term of $\mathcal{S}_p(\alpha,\beta)$ will contain a factor $p$ and thereby be an element in $p\Z_p$, except for the $j$th term where $j=(p-1)/2$ because then the factor $p$ in the numerator will vanish due to a factor $p$ in the denominator.

Hence, the only term of $\mathcal{S}_p(\alpha,\beta)$ not in $p\Z_p$ is 
\begin{align}\label{tppedpp}
\frac{(2p+1)!!}{p}(\alpha((p-1)/2) + \beta) &=(2p+1)\cdots(p+2)(p-2)\cdots3\cdot1((p-1)\alpha/2 + \beta).
\end{align}
This is certainly an element in $\Z_p$. Put \[ \mathcal{C} = \frac{(2p+1)!!}{p}.\] 
Then 
\begin{align*}\mathcal{C} &= (2p+1)(2p-1)\cdots(p+2)(p-2)\cdots3\cdot1 \\&= (p + p + 1)(p + p - 1)\cdots(p+4)(p+2)(p-2)\cdots3\cdot1 \\
&\equiv (p+1)(p-1)\cdots4\cdot2\cdot(p-2)!!\pmod{p}\\
&\equiv (p+1)!!(p-2)!! \pmod{p}\\
&\equiv (p+1)(p-1)!\pmod{p}.\end{align*}
By Wilson's theorem we know that $(p-1)!\equiv -1 \pmod{p}$ and from this we deduce 
\begin{equation}\label{csj}\widetilde{\mathcal{C}} = -1.\end{equation}
From (\ref{tppedpp})  and (\ref{csj}) we then obtain
\begin{align*}
\widetilde{\mathcal{S}}_p(\alpha,\beta) &= \alpha/2 - \beta .
\end{align*}
This completes the proof of Lemma \ref{sumidentfinitefield}.
\end{proof}

An important part of the proof of Proposition \ref{mainprop} is utilization of the following lemma, which is a generalization of a lemma from \cite{Elaydi2005} to hold for any field $k$.

\begin{lemma}\cite[\S 1.2]{Elaydi2005}\label{thmDIFF}
Let $k$ be a field. Furthermore let $f,g : \mathbb{Z}^+ \mapsto k$, and $y_0 \in k$. Given a nonhomogeneous difference equation \[y_{n+1} = f(n)y_{n} + g(n), y_{n_0} = y_0\] where $n_0 \in [0,n]$. The general solution to the difference equation is given by 
\begin{equation}\label{elaydieq}
y_{n} = \left[\prod_{j=n_0}^{n-1}f(j)\right]y_0 + \sum_{r = n_0}^{n-1}\left[\prod_{j=r+1}^{n-1}f(j)\right]g(r)
.\end{equation}
\end{lemma}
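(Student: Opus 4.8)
The plan is to prove the formula (\ref{elaydieq}) by induction on $n$, starting at $n = n_0$, using throughout the standard conventions that an empty product equals $1$ and an empty sum equals $0$.

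For the base case $n = n_0$ the product $\prod_{j=n_0}^{n_0-1} f(j)$ and the sum $\sum_{r=n_0}^{n_0-1}$ are both empty, so the right-hand side of (\ref{elaydieq}) collapses to $y_0$, which matches the initial condition $y_{n_0} = y_0$. For the inductive step I would assume (\ref{elaydieq}) holds for some $n \geq n_0$ and compute $y_{n+1}$ from the recurrence $y_{n+1} = f(n)y_{n} + g(n)$. Substituting the inductive hypothesis and distributing $f(n)$ over the two resulting terms, the key observation is that multiplying by $f(n)$ extends the upper index of each product from $n-1$ to $n$, that is $f(n)\prod_{j=n_0}^{n-1} f(j) = \prod_{j=n_0}^{n} f(j)$ and $f(n)\prod_{j=r+1}^{n-1} f(j) = \prod_{j=r+1}^{n} f(j)$ for every $r$ in the summation range. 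The leftover term $g(n)$ is then precisely the $r=n$ summand of $\sum_{r=n_0}^{n}\bigl[\prod_{j=r+1}^{n} f(j)\bigr] g(r)$, since $\prod_{j=n+1}^{n} f(j)$ is an empty product equal to $1$. Collecting the terms yields (\ref{elaydieq}) with $n$ replaced by $n+1$, which closes the induction.

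There is essentially no analytic content in this argument; the only thing requiring care is the bookkeeping of the index ranges together with the empty-product and empty-sum conventions. It is worth remarking that the proof uses only the ring axioms of $k$ (associativity and distributivity) and never division or an ordering, which is exactly why the statement remains valid over an arbitrary field $k$, this being the sole point of the generalization of the lemma from \cite{Elaydi2005}. Accordingly, the main obstacle, such as it is, is purely notational: writing the index shift in $\prod_{j=r+1}^{n-1}(\cdot)$ passing to $\prod_{j=r+1}^{n}(\cdot)$, and the absorption of $g(n)$ as the top term of the enlarged sum, cleanly enough that the induction is transparent.
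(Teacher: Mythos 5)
Your proof is correct and follows essentially the same route as the paper: induction on $n$, distributing $f(n)$ over the inductive hypothesis and absorbing $g(n)$ as the $r=n$ summand of the enlarged sum via the empty-product convention. You are in fact slightly more complete than the paper, which omits the base case $n=n_0$ that you verify explicitly.
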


\begin{proof}
This will be proved using induction.
Assuming that (\ref{elaydieq}) holds for some $n$, we now prove it for $n+1$.
We have  \[y_{n+1} = f(n)y_n + g(n).\]
Substitution of $y_n$ with the induction assumption yields
\begin{align*}
y_{n+1} &= f(n)\left[\left[\prod_{j=n_0}^{n-1}f(j)\right]y_0 + \sum_{r = n_0}^{n-1}\left[\prod_{j=r+1}^{n-1}f(j)\right]g(r)\right] + g(n) \\
&= f(n)\left[\prod_{j=n_0}^{n-1}f(j)\right]y_0 + f(n)\sum_{r = n_0}^{n-1}\left[\prod_{j=r+1}^{n-1}f(j)\right]g(r) \\
&= \left[\prod_{j=n_0}^{n}f(j)\right]y_0 + \sum_{r = n_0}^{n}\left[\prod_{j=r+1}^{n}f(j)\right]g(r),
\end{align*}
which completes the induction step. 
\end{proof}


\begin{proof}[Proof of Proposition \ref{mainprop}] 
The proof of this proposition is divided into three parts. In the first part we define a recurrence relation $\Delta_m$ with the property that $\Delta_p = g^p(\zeta)-\zeta$, a method also used in e.g. \cite{RiveraLetelier2003}, \cite{LindahlRiveraLetelier2013}, and find the difference equations that defines the first three significant terms in $\Delta_m$. In the second part we solve these difference equations for an arbitrarily chosen $m$, and in the last part we determine the coefficient of the first significant term in $\Delta_p$ and hence of $g^p(\zeta)-\zeta$.

\partn{Part 1. Finding the difference equations} 
Analogous to \cite{LindahlRiveraLetelier2013} for $m= 1$ we define the recurrence relation $\Delta_1(\zeta) := g(\zeta) - \zeta$ and for $m \geq 2$ \[\Delta_m(\zeta) := \Delta_{m-1}(g(\zeta)) - \Delta_{m-1}(\zeta).\] Note that $\Delta_p(\zeta) = g^p(\zeta) - \zeta$. For technical reasons we define $G_\infty := \Q_p[x_2,x_3,\dots]$, and for each integer $\ell \geq 1$ put $G_\ell:=\Q_p[x_2,x_3,\dots, x_\ell]$. Moreover we consider the power series $\widehat{g} \in G_\infty[[\zeta]]$ defined as \[\widehat{g}(\zeta) := \zeta(1 + x_2\zeta^2 + x_3\zeta^3 + x_4\zeta^4) \mod \langle \zeta^6 \rangle.\] 
For $m=1$ we define the relation $\widehat{\Delta}_1(\zeta) := \widehat{g}(\zeta)-\zeta$ and for each integer $m\geq 2$ \[\widehat{\Delta}_m(\zeta) := \widehat{\Delta}_{m-1}(\widehat{g}(\zeta))-\widehat{\Delta}_{m-1}(\zeta).\]  
Defined in this way there is a clear relation between $g(\zeta)$ and $\widehat{g}(\zeta)$ and thus  between $\Delta_m$ and $\widehat{\Delta}_m$. In the last part of the proof we exploit this relation to find the coefficients of $g^p(\zeta) - \zeta$.

Concerning $\widehat{\Delta}_2(\zeta)$, we have 
\begin{align*}\widehat{\Delta}_2(\zeta) &= \widehat{\Delta}_1(\widehat{g}(\zeta)) - \widehat{\Delta}_1(\zeta)\\  
&=x_2(\zeta +x_2\zeta^3 +x_3\zeta^4 +x_4\zeta^5+x_5\zeta^6+ \cdots)^3 \\
&\hspace{4mm}+x_3(\zeta +x_2\zeta^3 +x_3\zeta^4 +x_4\zeta^5+x_5\zeta^6 + \cdots)^4  \\
&\hspace{4mm}+x_4(\zeta +x_2\zeta^3 +x_3\zeta^4+x_4\zeta^5 +x_5\zeta^6 + \cdots)^5 \\
&\hspace{4mm}+x_5(\zeta +x_2\zeta^3 +x_3\zeta^4+x_4\zeta^5 +x_5\zeta^6 + \cdots)^6 + \dots - \Delta_1(\zeta)\\ 
&\equiv x_2\zeta^3 + 3x_2^2\zeta^5 + 3x_2x_3\zeta^6 + \left(\binom{3}{2}x_2^3+3x_2x_4\right)\zeta^7 +x_3\zeta^4 + 4x_2x_3\zeta^6 + 4x_3^2\zeta^7  \\
&\hspace{4mm}+x_4\zeta^5 + 5x_2x_4\zeta^7+x_5\zeta^6 +x_6\zeta^7 - \widehat{\Delta}_1(\zeta) \mod \langle\zeta^8\rangle\\ 
&\equiv  3x_2^2\zeta^5 + 7x_2x_3\zeta^6 + \left(\binom{3}{2}x_2^3 + 4x_3^2 + 8x_2x_4\right)\zeta^7 \mod \langle\zeta^8\rangle. 
\end{align*}
We will see that it is necessary to keep track of the three first significant terms since $\ord(\widehat{\Delta}_{m+1}(\zeta)) = \ord(\widehat{\Delta}_m(\zeta)) + 2$ for $m \in \{1,\dots,p-1\}$. Note that neither the $x_5$- nor the $x_6$-term is affecting the first three significant terms.

Let $x = (x_2,x_3,\dots)$, more generally for a given $m \in \{1,\dots,p\}$ we have \begin{equation}\label{coeffisarna}\widehat{\Delta}_m(\zeta) = A_m(x)\zeta^{2m+1} + B_m(x)\zeta^{2m+2} + C_m(x)\zeta^{2m+3} + \cdots,\end{equation} where $A_m(x), B_m(x),C_m(x) \in G_\infty$. Throughout the rest of this proof let $A_m := A_m(x), B_m := B_m(x)$ and $C_m := C_m(x)$ unless otherwise specified. Defined in this manner the polynomials can be described by the following system of linear difference equations represented as follows
\begin{equation}\label{rec}
\begin{bmatrix}
 x_2(2m + 1) & 0 & 0 \\
 x_3(2m + 1) & x_2(2m+2) & 0 \\
(x_2^2m + x_4) (2m + 1) & x_3(2m+2) & x_2(2m + 3)
 \end{bmatrix}
\begin{bmatrix}
 A_{m} \\
 B_{m} \\
 C_{m} 
\end{bmatrix}
 =
\begin{bmatrix}
 A_{m+1} \\
 B_{m+1} \\
 C_{m+1} 
 \end{bmatrix},
 \end{equation}
with the initial conditions $(A_1, B_1, C_1) = (x_2, x_3,x_4)$.
To see that this actually describes the situation we study $\widehat{\Delta}_{m+1}(\zeta)$ and obtain
\begin{align*}
\widehat{\Delta}_{m+1}(\zeta) &= \widehat{\Delta}_{m}(\widehat{g}(\zeta)) - \widehat{\Delta}_{m}(\zeta) \\
&= \widehat{\Delta}_{m}(\zeta + x_2\zeta^3+x_3\zeta^4 + x_4\zeta^5 + x_5\zeta^6 + \cdots) - \widehat{\Delta}_{m}(\zeta)\\
&= A_m(\zeta + x_2\zeta^3 + x_3\zeta^4 + x_4\zeta^5+ x_5\zeta^6 + \cdots)^{2m+1}\\
&\hspace{4mm} + B_m(\zeta + x_2\zeta^3 + x_3\zeta^4 + x_4\zeta^5+ x_5\zeta^6 + \cdots)^{2m+2}\\
&\hspace{4mm}+ C_m(\zeta + x_2\zeta^3 + x_3\zeta^4 + x_4\zeta^5+ x_5\zeta^6 + \cdots)^{2m+3}+\cdots -\widehat{\Delta}_{m}(\zeta) \\
&\equiv A_mx_2(2m+1)\zeta^{2m+3} + A_mx_3(2m+1)\zeta^{2m+4} + A_m\left(x_2^2\binom{2m+1}{2}  + x_4(2m+1)\right)\zeta^{2m+5}\\
&\hspace{4mm}+ B_mx_2(2m+2)\zeta^{2m+4} + B_mx_3(2m+2)\zeta^{2m+5} + C_mx_2(2m+3)\zeta^{2m+5} \mod \langle\zeta^{2m+6}\rangle\\ 
&\equiv A_mx_2(2m+1)\zeta^{2m+3} + (A_mx_3(2m+1) + B_mx_2(2m+2))\zeta^{2m+4} \\
&\hspace{4mm} + (A_m((2m+1)(x_2^2m +x_4)) + B_mx_3(2m+2) + C_mx_2(2m+3))\zeta^{2m+5} \mod \langle\zeta^{2m+6}\rangle.
\end{align*}
This in fact implies that $A_m, B_m, C_m \in G_4$. The next step of the proof is to find a closed form expression of these polynomials after $m$ iterations

\partn{Part 2. Solving the difference equations}
In this section we discuss the solutions to the difference equations (\ref{rec}) given in the previous section. First note that since $\Q_p$  is a field and the equations in (\ref{rec}) are linear there are unique solutions $\{A_m\}_{m\geq1}$, $\{B_m\}_{m\geq1}$ and $\{C_m\}_{m\geq1}$ respectively. Also note that all three difference equations are first order, and except for $A_m$ they are nonhomogeneous.

We now apply Lemma \ref{thmDIFF} to solve the equations. We start by considering the top equation in (\ref{rec}) \[A_{m+1} = x_2(2m+1)A_{m},\quad A_1 = x_2.\] Considering Lemma \ref{thmDIFF} we obtain the solution \begin{equation}\label{solutionA}A_m = \left[\prod_{j=1}^{m-1}x_2(2j+1)\right]A_1 = x_2^{m-1}(2m-1)!!x_2 = x_2^m(2m-1)!!.\end{equation} Now we have the solution to our first difference equation and insertion into $B_m$ in (\ref{rec}) yields 
\begin{align}\label{recBupdate}
B_{m+1} &= x_2(2m+2)B_{m} + x_3(2m+1)A_{m} \notag\\ 
&= x_2(2m+2)B_{m} + x_3(2m+1)x_2^{m}(2m-1)!! \notag\\
 &= x_2(2m+2)B_{m} + x_2^{m}x_3(2m+1)!!.
 \end{align}

Now we can see that $B_m$ is in fact a nonhomogeneous difference equation. We utilize substitution to obtain a simpler expression. Assuming $x_3\neq0$ this yields
\begin{equation}\label{Bsub}
B_{m+1}^* = \frac{B_{m+1}}{x_2^{m}x_3(2m+1)!!}.
\end{equation} 
Insertion of (\ref{Bsub}) into (\ref{recBupdate}) yields
\[x_2^{m}x_3(2m+1)!!B_{m+1}^* = x_2^{m}x_3(2m+2)(2m-1)!!B^*_{m} + x_2^{m}x_3(2m+1)!!,\]
hence \[B_{m+1}^* = \frac{2m+2}{2m+1}B^*_{m} + 1.\]
Note that $B_1 = x_3$ implies that $B^*_1 = 1$ from (\ref{Bsub}). Now we can solve $B_m^*$ using Lemma \ref{thmDIFF} and we obtain
\begin{align*}
 B_{m}^* &= \left[\prod_{j=1}^{m-1}\frac{2j+2}{2j+1}\right]B^*_1 + \sum_{r = 1}^{m-1}\left[\prod_{j=r+1}^{m-1}\frac{2j+2}{2j+1}\right] \\
&= \sum_{r=1}^m\left[\prod_{j=r+1}^m \frac{2j}{2j-1}\right].
\end{align*} 
Substitution of $B_m^*$ in (\ref{Bsub}) for $B_m$ together with the definition of $\mathcal{R}_m$ in Lemma \ref{sumident} yields 
\begin{align}\label{Bsolve}
B_{m} &= x_2^{m-1}x_3(2m-1)!!\sum_{r=1}^m\left[\prod_{j=r+1}^m \frac{2j}{2j-1}\right] \notag \\
&= x_2^{m-1}x_3\mathcal{R}_m \notag \\
 &= x_2^{m-1}x_3\left((2m+1)!!-(2m)!!\right).
 \end{align}
Hence, we have a solution for $B_m$ as well which means that we can express $C_m$ (the last equation in (\ref{rec})) as a nonhomogeneous difference equation depending solely on $C_{m}$ and $m$. Inserting the solutions for $A_m$ and $B_m$, (\ref{solutionA}) and (\ref{Bsolve}) respectively into (\ref{rec}) with initial value $C_1 = x_4$ and for $m\geq1$ we have  
\begin{align}\label{recCnonhomo} C_{m+1} &= x_2^{m}(2m-1)!!((2m+1)(x_2^2m + x_4)) \\ 
&\hspace{4mm} + 2(m+1)x_2^{m-1}x_3^2((2m+1)!!-(2m)!!) + x_2(2m+3)C_{m}\notag\\
&= x_2^{m+2}m(2m+1)!! + x_2^{m}x_4(2m+1)!!\notag\\
&\hspace{4mm} + 2(m+1)x_2^{m-1}x_3^2((2m+1)!!-(2m)!!) + x_2(2m+3)C_{m}.\notag
\end{align}
Since this is a linear difference equation we simplify this problem by splitting this equation into three separate equations. For $m \geq 1$  we define $D_m, E_m$ and $F_m$ as 
\begin{equation}\label{rec2}
 \begin{bmatrix}
 x_2(2m+3) D_{m} \\
 x_2(2m+3) E_{m} \\
 x_2(2m+3) F_{m} 
 \end{bmatrix}
 +
 \begin{bmatrix}
 d(m) \\
e(m) \\
f(m) 
 \end{bmatrix}
 =
 \begin{bmatrix}
 D_{m+1} \\
 E_{m+1} \\
 F_{m+1} 
 \end{bmatrix},
 \end{equation}
with the initial conditions $(D_1, E_1, F_1) = (0, x_4, 0)$.

Furthermore, let \begin{align*}
d(m) &= x_2^{m+2}m(2m+1)!!\\
e(m) &= x_2^{m}x_4(2m+1)!!\\
f(m) &= 2(m+1)x_2^{m-1}x_3^2((2m+1)!!-(2m)!!),
\end{align*} and thereby equation (\ref{recCnonhomo}) is satisfied. This can be seen as follows 
\begin{align*}D_{m+1} + E_{m+1} + F_{m+1} &=  x_2(2m+3)D_{m} + d(m) + x_2(2m+3)E_{m} \\
&\hspace{4mm}+ e(m) + x_2(2m+3)F_{m} + f(m) \\
&= x_2(2m+3)(D_{m} + E_{m} + F_{m}) + d(m) + e(m) + f(m) \\
&= x_2(2m+3)(C_{m}) + d(m) + e(m) + f(m)\\
&= C_{m+1}.
\end{align*} By solving $D_m$, $E_m$ and $F_m$ separately we can retrieve the solution to $C_m$. 

We start by finding the solution for \begin{equation}\label{recD}D_{m+1} = x_2(2m+3)D_{m} + x_2^{m+2}m(2m+1)!!.\end{equation} Analogous with (\ref{Bsub}) we utilize substitution \begin{equation*}\label{subD}D_{m+1}^* = \frac{D_{m+1}}{x_2^{m+2}(2m+3)!!}.\end{equation*} Insertion into (\ref{recD}) yields \begin{equation*}D_{m+1}^* = D_{m}^* + \frac{m}{2m+3},\end{equation*} and since $D^*_1 = 0$ this means that for every iteration of the recursion the fraction term will add on. This yields a solution of the form \[D^*_m = \sum_{j=1}^{m-1}\frac{j}{2j+3} =  \sum_{j=1}^{m}\frac{j-1}{2j+1} .\] Changing variable back to $D_m$ we get that \begin{equation}\label{DSolved}D_m = x_2^{m+1}(2m+1)!!\sum_{j=1}^m\frac{j-1}{2j+1}.\end{equation}

The difference equation for $E_m$ is given by \begin{equation}\label{regE} E_{m+1} = x_2(2m+3)E_{m} + x_2^{m}x_4(2m+1)!!.\end{equation} Substitution gives us \begin{equation*}\label{subE}E_{m+1}^* = \frac{E_{m+1}}{x_2^{m}(2m+3)!!},\end{equation*} which then yields \begin{equation}\label{emmoebler} E_{m+1}^* = E_{m}^* + \frac{x_4}{2m+3}.\end{equation} By utilizing Lemma \ref{thmDIFF} the solution to (\ref{emmoebler}) is given by,
\begin{align*} 
E_m^* &= \frac{x_4}{3} + \sum_{j=1}^{m-1}\frac{x_4}{2j+3} = x_4\sum_{j=1}^{m}\frac{1}{2j+1}, 
\end{align*} 
which means that the solution to (\ref{regE}) is
\begin{equation} \label{Esolved}E_m = x_2^{m-1}x_4(2m+1)!!\sum_{j=1}^{m}\frac{1}{2j+1}.\end{equation}

The difference equation for $F_m$ is given by \begin{equation}\label{recF}F_{m+1} = x_2(2m+3)F_{m} + 2(m+1)x_2^{m-1}x_3^2((2m+1)!!-(2m)!!).\end{equation} Assume that $x_3\neq0$, and furthermore as in the previous case for $D_m$ we use substitution, and obtain \begin{equation*}\label{subF}F_{m+1}^* = \frac{F_{m+1}}{x_2^{m-1}x_3^2(2m+3)!!}.\end{equation*} Inserting this into (\ref{recF}) yields \[F_{m+1}^* = F_{m}^* + (2m+2)\left(\frac{(2m+1)!! - (2m)!!}{(2m+3)!!}\right) \iff F_{m+1}^* = F_{m}^* + \frac{2m+2}{2m+3} - \frac{(2m+2)!!}{(2m+3)!!}.\] By the same reasoning as above, since $F_1^* = 0$, we have \[F_m^* =\sum_{j=1}^{m-1} \left[\frac{2j+2}{2j+3} - \frac{(2j+2)!!}{(2j+3)!!}\right]= \sum_{j=1}^{m}\left[\frac{2j}{2j+1} - \frac{(2j)!!}{(2j+1)!!}\right],\] which means that the solution to our original equation is \begin{equation}\label{solvedF}F_m = x_2^{m-2}x_3^2(2m+1)!!\sum_{j=1}^{m}\left[\frac{2j}{2j+1} - \frac{(2j)!!}{(2j+1)!!}\right].\end{equation} Consequently, by summing (\ref{DSolved}), (\ref{Esolved}) and (\ref{solvedF}) we obtain
\begin{align}\label{CM}C_m &= D_m + E_m + F_m\\ 
&= x_2^{m+1}(2m+1)!!\sum_{j=1}^m\frac{j-1}{2j+1}\notag \\
&\hspace{4mm} + x_2^{m-1}x_4(2m+1)!!\sum_{j=1}^{m}\frac{1}{2j+1} \notag\\ 
&\hspace{4mm}+ x_2^{m-2}x_3^2(2m+1)!!\sum_{j=1}^{m}\left[\frac{2j}{2j+1} - \frac{(2j)!!}{(2j+1)!!}\right].\notag
\end{align}Together with (\ref{solutionA}) and (\ref{Bsolve}) we now have closed form expressions of the coefficients in (\ref{coeffisarna}).


\partn{Part 3. Determining the coefficients of $g^p(\zeta) - \zeta$}
Recall that \[\widehat{g}^p(\zeta)-\zeta = A_p\zeta^{2p+1}+B_p\zeta^{2p+2} + C_p\zeta^{2p+3} \mod \ang{\zeta^{2p+4}}.\] It follows from  (\ref{solutionA}) and (\ref{Bsolve}) that $A_p,B_p\in \Z_p[x_2,x_3,x_4]$. In particular, \begin{equation}\widetilde{A}_p = \widetilde{B}_p = 0.\end{equation}
Using the definitions of $\mathcal{S}_p$ and $\mathcal{T}_p$ from Lemma \ref{sumident}  and \ref{sumidentfinitefield} together with (\ref{CM}) we have
\begin{align*} C_p &= x_2^{p+1}\mathcal{S}_p(1,-1) + x_2^{p-1}x_4\mathcal{S}_p(0,1) + x_2^{p-2}x_3^2\Big(\mathcal{S}_p(2,0) - \mathcal{T}_p\Big).
\end{align*} 
Also note that by  Lemma \ref{sumident} and \ref{sumidentfinitefield} we have 
\[\widetilde{\mathcal{S}}_p(1,-1) = 3/2,\quad \widetilde{\mathcal{S}}_p(0,1) = -1,\quad \widetilde{\mathcal{S}}_p(2,0) = 1, \text{ and } \quad\widetilde{\mathcal{T}}_p = 0.\]
Consequently, \[\widetilde{C}_p(x_2,x_3,x_4) = x_2^{p-2}(3/2x_2^3+x_3^2-x_2x_4).\]
Finally, for each $i \geq 1$ we specialize each variable $x_i$ to $a_i$ and obtain
\begin{align}\label{solvCP}
\widetilde{C}_p(a_2,a_3,a_4) 
&= a_2^{p-2}\left(3/2a_2^3 + a_3^2 - a_2a_4\right).
\end{align}
We conclude that
\begin{align*}g^p(\zeta)-\zeta &= \widetilde{C}_p(a_2,a_3,a_4)\zeta^{2p+3} \mod \ang{\zeta^{2p+4}}\\
&= a_2^{p-2}(3/2a_2^3+a_3^2-a_2a_4)\zeta^{2p+3} \mod \ang{\zeta^{2p+4}}.
\end{align*} This completes the proof of Proposition \ref{mainprop}.
\end{proof} 

\begin{proof}[Proof of Theorem \ref{maintheorem}]
Assuming that $a_1 = 0$, $a_2 \neq 0$ and $3/2a_2^3+a_3^2-a_2a_4 \not= 0$, then we obtain by the assumption that $i_0(g) = 2$, and by Proposition \ref{mainprop} that ${i_1(g) = 2(1+p)}$, together with \cite[Corollary 1]{LaubieSaine1998} this implies that $g$ is 2-ramified. Conversely, assuming that $g$ is 2-ramified then by Proposition \ref{mainprop} we obtain $3/2a_2^3+a_3^2-a_2a_4 \not= 0$. We also recall that if $g$ is 2-ramified then $i_0(g) = 2$, which in turn implies that $a_1 = 0$ and $a_2 \neq0$.
\end{proof}

\subsection{Proof of implication}
We give a proof to Corollary \ref{corrminuz}, stated in \S1.1.
\begin{proof}[Proof of Corollary \ref{corrminuz}]
We compute $f^2(\zeta)$ explicitly. Recall that \[f(\zeta) = \zeta(-1 + a_1\zeta + a_2\zeta^2 + a_3\zeta^3 + a_4\zeta^4) \mod \langle \zeta^6 \rangle,\] which yields
\[f^2(\zeta) = \zeta  -2(a_1^2+a_2)\zeta^3 + (a_1^3+a_1a_2)\zeta^4 + (3a_2^3-6a_1a_3-a_1^2a_2-2a_4)\zeta^5 \mod \langle \zeta^6\rangle.\]
Using Theorem \ref{maintheorem} we have that $f^2$ is 2-ramified if and only if (\ref{result}) holds. This yields
\begin{align*}\label{fraccorr}
&\frac{1}{2}(-2a_1^2-2a_2)^{p-2}(3(-2a_1^2-2a_2)^3+2(a_1^3+a_1a_2)^2\\ 
&\hspace{4mm}-2(-2a_1^2-2a_2)(3a_2^3-6a_1 a_3-a_1^2a_2-2a_4)) \notag \\
&=-(-2a_1^2-2a_2)^{p-2}(a_1^2+a_2) (11 a_1^4+25 a_1^2 a_2+12 a_1 a_3+6 a_2^2+4 a_4) \notag \\ 
&=2^{p-2}(a_1^2+a_2)^{p-1}(11 a_1^4+25 a_1^2 a_2+12 a_1 a_3+6 a_2^2+4 a_4) ,\notag
\end{align*} 
which proves our corollary.
\end{proof}

\newpage

\bibliographystyle{alpha} 
\bibliography{bib}

\end{document}